\newtheorem{theorem}{Theorem}[section]
\newtheorem{lemma}[theorem]{Lemma}
\newtheorem{proposition}[theorem]{Proposition}
\theoremstyle{definition}
\newtheorem{remark}[theorem]{Remark}
\numberwithin{equation}{section}%编号随章节变化
\let\a=\alpha \let\al=\alpha
\let\b=\beta
\let\d=\delta
\let\D=\Delta
\let\f=\frac
\let\g=\gamma
\let\G=\Gamma
\let\i=\infty %zgp
\let\l=\ell%zgp
\let\r=\rho
\let\t=\tau
\let\pa=\partial
\let\th=\theta
\let\na=\nabla
\let\wt=\widetilde
\let\wh=\widehat
\let\vph=\varphi
\def\bbR{\mathbb{R}}
\def\bbZ{\mathbb{Z}}
\def\bbN{\mathbb{N}}
\def\scrF{\mathscr{F}}
\def\md{\,\mathrm{d}}
\def\me{\mathrm{e}}
\def\mi{\mathrm{i}}
\newcommand{\be}{\begin{equation*}}
\newcommand{\ee}{\end{equation*}}
\newcommand{\ben}{\begin{equation}}
\newcommand{\een}{\end{equation}}
\newcommand{\bn}{\begin{enumerate}}
\newcommand{\en}{\end{enumerate}}
\def\lp{L^p}
\def\mpqa{M^{p,q}_{0,\al}}
\def\mpqsa{M^{p,q}_{s,\al}}
\def\mpqda{M^{p,q}_{\d n,\al}}
\def\bka{\Box_k^{\al}}
\def\bkat{\widetilde{\Box}_k^{\al}}
\def\aa{\frac{\al}{1-\al}}
\def\naa{\frac{-\al}{1-\al}}
\def\ata{\frac{2\al}{1-\al}}
\def\lkr{\langle k \rangle}             \def\llr{\langle \l \rangle}%<k>
\def\ka{\lkr^{\frac{1}{1-\al}}}         %\def\lao{\llr^{\frac{1}{1-\al}}}
\def\kaa{\lkr^{\frac{\al}{1-\al}}}      \def\laa{\llr^{\frac{\al}{1-\al}}}
\def\kaak{\kaa k}                       \def\laal{\laa \l}
\begin{document}
\title[Unimodular multipliers on $\alpha$-modulation spaces]
{Unimodular multipliers on $\alpha$-modulation spaces: A revisit with new method under weaker conditions}
\author{GUOPING ZHAO}
\address{School of Applied Mathematics, Xiamen University of Technology, Xiamen, 361024, P.R.China}
\email{guopingzhaomath@gmail.com}
\author{WEICHAO GUO}
\address{School of Science, Jimei University, Xiamen, 361021, P.R.China}
\email{weichaoguomath@gmail.com}
\thanks{}
\subjclass[2010]{42B15, 42B35,42B37}%42B15-Multipliers; 42B35-Function spaces arising in harmonic analysis; 42B37-Harmonic analysis and PDE
\keywords{Unimodular multiplier, $\alpha$-modulation space, Fourier multiplier.}

\begin{abstract}
By a new method derived from Nicola--Primo--Tabacco \cite{NicolaPrimoTabacco2018JoPOaA},
we study the boundedness on $\a$-modulation spaces of unimodular multipliers with symbol $\me^{\mi\mu(\xi)}$.
Comparing with the previous results,
the boundedness result is established for a larger family of unimodular multipliers under weaker assumptions.
\end{abstract}

\maketitle

\section{Introduction and Preliminary}
In this paper we study the Fourier multiplier in $\bbR^n$ of the form
\[
\me^{\mi\mu(D)}f:=\scrF^{-1}\me^{\mi\mu(\xi)}\scrF f(\xi),
\]
which is called the unimodular multiplier with symbol $\me^{\mi\mu(\xi)}$, where $\mu(\xi)$ is a real-valued function,
$\scrF$ and $\scrF^{-1}$ are Fourier transform and inverse Fourier transform respectively.

In order to solve the Cauchy problem for the nonlinear Schr\"{o}dinger equation (NLS)
\[\mi u_t + \D u=f(u), u(0,x)=u_0(x),\]
where $\mi=\sqrt{-1}$, $\D=\sum\limits_{j=1}^n \partial_j^2$ and $f(u)$ is a nonlinear function,
e.g. $f(u)=|u|^{\b}u$ with $\b>0$,
one always considers its equivalent integral form
\[
  u(t)= \me^{\mi t\D} u_0  -  \mi \int_0^t \me^{\mi (t-\t)\D}f(u(\t))\md\t,
\]
and needs to make some elaborate (semi-)norm estimates for the linear part and the nonlinear part of the above integral form,
e.g. Strichartz estimate, more precisely, the boundedness of $e^{it\D}$ on function spaces.
It is known that
$\me^{\mi t\D}: L^p\rightarrow L^p$ is bounded iff $p=2$, which is one of the reasons that we can not solve NLS in $L^p$ $(p\neq2)$.
Similar situation happens to the Besov spaces, i.e. $\me^{\mi t\D}$ is bounded on $B^{p,q}_s$ iff $p=2$, see \cite{Mizuhara1987MN}.
It is well-known that the (inhomogenous) Besov space $B^{p,q}_s$ is a frequency decomposition space associated with dyadic decomposition.
Surprisingly, as a frequency decomposition space associated with uniform decomposition,
the modulation space $M^{p,q}_s$ keeps the $M^{p,q}_s \rightarrow M^{p,q}_s $  boundedness of $\me^{\mi t\D}$ for all $1\leq p \leq \infty$.
This phenomenon was first discovered by B\'{e}nyi-Gr\"{o}chenig-Okoudjou-Rogers \cite{BenyiGroechenigOkoudjouRogers2007JFA},
and then it was developed and sharpen by Miyachi-Nicola-Rivetti-Tabacco-Tomita \cite{MiyachiNicolaRivettiTabaccoTomita2009PAMS}.
Thanks to the boundedness of  $\me^{\mi t\D}$, and its fractional form $\me^{\mi t(-\D)^{\beta/2}}$ between modulation spaces,
we can solve NLS and the fractional Schr\"{o}dinger equation with initial data belongs to modulation spaces $M^{p,q}_s$ for all $1\leq p \leq \infty$, see \cite{BenyiOkoudjou2009BLMS, GuoChen2014FMC}.

Modulation spaces was introduced firstly by Feichtinger \cite{Feichtinger1983} in 1983 to
give a simultaneous description of temporal and frequency behavior for a function or distribution. %
The study of modulation has over time been transformed into a rich and multifaceted theory,
providing basic insights into such topics as harmonic analysis, time-frequency analysis
and partial differential equations. Nowadays, the theory has played more and
more notable roles.
One can refer \cite{GalperinSamarah2004ACHA, RuzhanskySugimotoToftTomita2011MN} for some basic properties of modulation spaces, and \cite{CorderoNicola2009BAMS, GuoChenFanZhao2016ataiMMJ, GuoFanWuZhao2018SM} for the production and convolution properties on (weighted)  modulation spaces,
and \cite{Tomita2006MN, SugimotoTomita2008MN, ZhongChen2011SCM}  for the boundedness of fractional integrals,
and also \cite{ZhaoFanGuo2018AFA} for the boundedness of Hausdorff operators on modulation spaces, and see
\cite{BenyiGrafakosGroechenigOkoudjou2005ACHA, BenyiGroechenigOkoudjouRogers2007JFA, WangHudzik2007JDE, BenyiOkoudjou2009BLMS, CorderoNicola2009JMAA}
for the study of nonlinear evolution equations on modulation spaces.
For the boundedness of unimodular multipliers $\me^{\mi\mu(D)}$
between modulation spaces, one can also see some recent articles \cite{ChenFanSun2012DCDS, ZhaoChenFanGuo2016NA, NicolaPrimoTabacco2018JoPOaA}.

As mentioned before, modulation and Besov spaces
are frequency decomposition spaces associated with uniform and dyadic decomposition respectively.
As an intermediate decomposition between the dyadic and uniform decomposition,
the $\a$-covering was first introduced by Ferchtinger \cite{FeichtingerGroebner1985MN, Feichtinger1987MN}.
Then, using the $\alpha$-covering
of the frequency plane, Gr\"{o}bner \cite{Grobner1992} introduced
the $\alpha $-modulation spaces $M_{p,q}^{s,\alpha }$ for $\alpha \in \lbrack 0,1)$.

Accordingly, the $\a$-modulation space (concrete definition in Section 2), generated by the $\a$-covering, is introduced formally as the intermediate spaces between modulation space and Besov space.
The space $M_{p,q}^{s,\alpha }$
coincides with the modulation space $M_{p,q}^{s}$ when $\alpha=0$, and
the (inhomogeneous) Besov space $\ B_{p,q}^{s}$ can be regarded as the limit case of
$M_{p,q}^{s,\alpha }$ as $\alpha\rightarrow 1 $ (see \cite{Grobner1992}). So, for the sake of convenience, we
can view the Besov space as a special $\alpha$-modulation space and use $%
M_{p,q}^{s,1}$ to denote the inhomogeneous Besov space $B_{p,q}^{s}$.
It is worth mentioning that $\a$-modulation spaces is NOT the interpolation space between modulation and Besov spaces \cite{GuoFanWuZhao2016JFAA}.
This fact reveals that the boundedness result on $\alpha$-modulation spaces can not be automatically valid by the corresponding results on
modulation and Besov spaces.

Among numerous references on $\a$-modulation spaces,
one can see \cite{HanWang2014JMSJ, ToftWahlberg2012PSMB} for elementary properties of $\a$-modulation spaces,
see \cite{GuoFanZhao2018SCM} for the full characterization of embedding between $\a$-modulation spaces,
see \cite{WuChen2014AMJCUSB, ZhaoFanGuo2016MN,ZhaoGuoYu2018BAMS} for the research of boundedness of fractional integrals
and see \cite{Borup2004JFSA,BorupNielsen2006JMAA,BorupNielsen2006AM,BorupNielsen2006MN} for the study of pseudodifferential operators and nonlinear appoximation.
We also point out that the boundedness of unimodular multipliers on $\a$-modulation spaces has been studied
in \cite{ZhaoChenGuo2014JFS, ZhaoChenFanGuo2016NA}, in which if we take $\a=0$, the result is accordance to that in \cite{MiyachiNicolaRivettiTabaccoTomita2009PAMS}. Denote by $[t]$ the integer part of $t\in\mathbb{R}$.
The main boundedness result on $\alpha$-modulation space of unimodular multipliers can be stated as follows.
~

\noindent \textbf{Theorem A (\cite{ZhaoChenGuo2014JFS,ZhaoChenFanGuo2016NA})}
Let $1\leq p,q \leq \infty, s\in \mathbb{R}, \alpha \in [0,1]$.
Assume that $\mu$ is a real-valued function of class $C^{[n/2]+3}(\mathbb{R}^n \backslash \{0\})$  which satisfies
\begin{equation*}
|\partial^{\r}(\langle \xi\rangle^{2\al-2s} \partial^{\g}\mu(\xi))|\leq C_{\gamma}\langle\xi\rangle^{-|\r|}, \hspace{10mm} ~|\r|\leq[n/2]+1
\end{equation*}
for all $|\g|=2$.
Then we have
\begin{equation*}
\|e^{i\mu(D)}f\|_{M^{p,q}_{0,\alpha}} \leq C \|f\|_{M^{p,q}_{\d n,\alpha}},
\end{equation*}
with $\d \geq |1/p-1/2|\max\{2s,0\}$, where the constant $C$ is independent of $f$.

~

In order to compare with the results of this paper, Theorem A is stated by an equivalent form of the corresponding boundedness results in \cite{ZhaoChenGuo2014JFS,ZhaoChenFanGuo2016NA}, where the potential loss has been proved to be sharp.

Recently, in the case of modulation space, Nicola-Primo-Tabacoo \cite{NicolaPrimoTabacco2018JoPOaA} use a more soft and elegant method to deal with the boundedness of unimodular multipliers.
Inspired by this, we further consider the boundedness of unimodular multipliers on $\a$-modulation spaces.

More precisely, we establish the boundedness on $\a$-modulation spaces of $e^{i\mu(D)}$
by a new method derived from \cite{NicolaPrimoTabacco2018JoPOaA}.
In contrast to the previous results as in \cite{ZhaoChenGuo2014JFS, ZhaoChenFanGuo2016NA},
our results is valid under a weaker condition, see Remark \ref{TH.Vs.Known} for more details.
Since the $\alpha$-covering for $\alpha\in (0,1]$ is not uniform bounded as the case of modulation space ($\alpha=0$),
we refine the technique in \cite{NicolaPrimoTabacco2018JoPOaA}, making it more efficient and adaptable to our situation.

Denote by $\|f\|_{\scrF M^{1,\i}_{s,\a}}:=\|\scrF^{-1} f\|_{ M^{1,\i}_{s,\a}}$.
Our main result is stated as follows.

%%%%%%%%%%%%%%%%%%%%%%%%定理

%%%%%%%%%%%%%%%%%%%%%%%%定理
%                            有界性定理
\begin{theorem}[Boundedness of unimodular multiplier on $\alpha$-modulation space]\label{TH.Bdd.unim.multipl.AlphaModu.}
  Let $1\leq p,q\leq\infty$, $s\in\bbR$, $\al\in[0,1]$ and $\mu\in C^2(\bbR^n)$ be a real valued function satisfying
  \begin{equation}
    \partial^{\g}\mu\in \scrF M^{1,\i}_{2\al-2s,\a} \text{ \, for all multi-index } \g \text{ with } |\g|=2.
  \end{equation}
  Then
  \begin{equation}
    \me^{\mi\mu(D)}:\mpqda\rightarrow\mpqa
  \end{equation}
  is bounded with
  $\d\geq  \left|1/p-1/2\right|  \max\left\{2s,0\right\}$.
\end{theorem}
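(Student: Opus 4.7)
The strategy is to adapt the frequency-localized approach of Nicola--Primo--Tabacco \cite{NicolaPrimoTabacco2018JoPOaA} to the non-uniform $\alpha$-covering. I will decompose $f=\sum_k \Box_k^{\a} f$ and reduce the theorem to showing that the localized Fourier multiplier
\[
 T_k := \scrF^{-1}\,\widetilde{\chi}_k(\xi)\me^{\mi\mu(\xi)}\,\scrF,
\]
where $\widetilde{\chi}_k$ is a smooth bump supported in a slightly enlarged box $\widetilde{Q}_k^{\a}$ of center $\xi_k$ and radius $r_k\sim\lkr^{\a/(1-\a)}$ equal to one on the Fourier support of $\Box_k^{\a}f$, obeys an $L^p$-operator bound of order $\lkr^{\d n/(1-\a)}$. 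To analyze $T_k$, I Taylor-expand $\mu$ about $\xi_k$:
\[
 \mu(\xi)=\mu(\xi_k)+\na\mu(\xi_k)\cdot(\xi-\xi_k)+R_k(\xi),
\]
with $R_k(\xi)=\sum_{|\g|=2}\tfrac{2}{\g!}(\xi-\xi_k)^{\g}\int_0^1(1-t)\pa^{\g}\mu(\xi_k+t(\xi-\xi_k))\md t$. The constant $\me^{\mi\mu(\xi_k)}$ is unimodular, and the linear phase $\me^{\mi\na\mu(\xi_k)\cdot(\xi-\xi_k)}$ acts on the spatial side as translation by $\na\mu(\xi_k)$, an $L^p$ isometry, so all nontrivial content lies in the quadratic symbol $\widetilde{\chi}_k\me^{\mi R_k}$.

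The main obstacle is the estimate on this quadratic factor, and it is where the non-uniformity of the $\a$-covering really enters. By the embedding of the Wiener algebra into the multiplier algebra, $\|T_k\|_{L^p\to L^p}\lesssim\|\widetilde{\chi}_k\me^{\mi R_k}\|_{\scrF L^1}$, so I rescale $\xi=\xi_k+r_k\eta$ to the unit ball, where the rescaled remainder $\widetilde{R}_k(\eta):=R_k(\xi_k+r_k\eta)$ has second derivatives $\pa^{\g}\widetilde{R}_k(\eta)=r_k^{2}\,\pa^{\g}\mu(\xi_k+r_k\eta)$ for $|\g|=2$. The hypothesis $\pa^{\g}\mu\in\scrF M^{1,\i}_{2\a-2s,\a}$ says precisely that, localized to $\widetilde{Q}_k^{\a}$, the function $\pa^{\g}\mu$ has a $\scrF M^{1,\i}$-type norm controlled by $\lkr^{(2s-2\a)/(1-\a)}$; combined with the factor $r_k^{2}=\lkr^{2\a/(1-\a)}$ coming from rescaling, this yields a uniform bound on $\|D^2\widetilde{R}_k\|$ in the relevant localized Wiener-type norm when $s\le 0$, and a bound of order $\lkr^{2s/(1-\a)}$ when $s>0$. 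A Banach-algebra composition estimate of the form $\|\me^{\mi\vph}\|_{\scrF L^1(B)}\lesssim F(\|\vph\|_{\scrF L^1(B)})$ on the unit ball $B$, coupled with Plancherel (which gives a $k$-uniform $L^2\to L^2$ bound of size one for the unimodular symbol), produces a trivial $L^2$-estimate and a $\lkr^{ns/(1-\a)}$-type $L^1$/$L^\i$-estimate; interpolation between these contributes exactly the loss $|1/p-1/2|\max\{2s,0\}n$ in $\d n$.

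To finish I reassemble the pieces. Since $T_k\Box_k^{\a}f$ is essentially supported in a box of radius $\sim r_k$ translated by $\na\mu(\xi_k)$, it interacts with only boundedly many $\Box_{k'}^{\a}$-pieces, so the almost-orthogonality of the $\a$-decomposition reduces the double sum in the $\mpqa$-norm to a shifted diagonal one. Taking $\ell^q$ in $k$ against the weight $\lkr^{\d n/(1-\a)}$ defining $\mpqda$ then converts the pointwise bound on $\|T_k\|_{L^p\to L^p}$ into the desired global inequality. The subtle point, and the refinement over \cite{NicolaPrimoTabacco2018JoPOaA} needed to cover all $\a\in[0,1]$, is a careful bookkeeping of the scale $r_k$ and of the shift $k\mapsto k'$ in the $\a$-covering induced by the translation by $\na\mu(\xi_k)$; this is what makes the composition estimate and the weighted summation compatible without losing sharpness in the exponent $\d$.
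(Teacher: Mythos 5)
Your proposal reproduces the correct skeleton of the paper's argument: localization of the symbol to the $\a$-boxes, the embedding of the Wiener algebra $\scrF L^1$ into the $L^p$-multiplier algebra, Taylor expansion at the box center with removal of the constant and linear phase, a composition estimate in $\scrF L^1$, and finally complex interpolation with the trivial Plancherel bound at $p=2$ (this is exactly the structure: endpoint Theorem \ref{TH.Bdd.unim.multipl.AlphaModu.-Special} plus interpolation). But there is a genuine gap at the decisive quantitative step, namely the case $s>0$. After rescaling a single box of radius $r_k\sim\lkr^{\aa}$ to the unit ball, your quadratic remainder satisfies only $\|\widetilde R_k\|_{\scrF L^1}\lesssim \lkr^{2s/(1-\a)}$, and the Banach-algebra composition estimate then yields
\[
\big\|\me^{\mi \widetilde R_k}\big\|_{\scrF L^1}
\le \sum_{m\ge 0}\frac{1}{m!}\,\|\widetilde R_k\|_{\scrF L^1}^{\,m}
=\exp\big(C\lkr^{2s/(1-\a)}\big),
\]
which is \emph{exponentially} large in $\lkr^{2s/(1-\a)}$, not the polynomial bound $\lkr^{sn/(1-\a)}$ you assert. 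No "composition estimate $F$" alone can produce the dimensional factor $n$: that factor is of counting origin, not of Banach-algebra origin, and interpolation cannot repair an exponential endpoint loss. As written, your $L^1$ and $L^\i$ endpoint estimates are unproven, so the whole argument collapses precisely in the regime where the loss $\d$ matters.

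The paper closes exactly this gap with a dilation-and-counting mechanism, which is the refinement of Nicola--Primo--Tabacco that your sketch omits. One writes $\|\eta_k^\a\,\me^{\mi\mu}\|_{\scrF L^1}=\|\eta_k^\a(\lkr^{-\d/(1-\a)^2}\cdot)\,\me^{\mi\mu_k}\|_{\scrF L^1}$ with $\mu_k=\mu(\lkr^{-\d/(1-\a)^2}\cdot)$, and decomposes the dilated bump over the family $E_k$ of $\a$-boxes it meets, for which $\#E_k\sim\lkr^{\d n/(1-\a)}$ and $\llr\sim\lkr^{1+\d/(1-\a)}$ for $\l\in E_k$. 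The dilation multiplies second derivatives by $\lkr^{-2\d/(1-\a)^2}$, and this \emph{exactly cancels} the growth: on each box $\l\in E_k$ the rescaled remainder $\vph_{k,\l}^\a$ obeys $\|\vph_{k,\l}^\a\|_{\scrF L^1}\lesssim\sum_{|\g|=2}\|\pa^{\g}\mu\|_{\scrF M^{1,\i}_{2\a-2\d,\a}}$ uniformly in $k$ and $\l$, so the exponential from the power series is harmless, and the entire polynomial loss $\lkr^{\d n/(1-\a)}$ comes from the cardinality $\#E_k$. (Equivalently one could cover your unit ball by $\sim\lkr^{\d n/(1-\a)}$ balls of suitably small radius and subtract an affine phase on each; some such covering-counting step is indispensable.) Two lesser remarks: for $s\le 0$ your argument does go through, since then $\|\widetilde R_k\|_{\scrF L^1}\lesssim 1$ and $\d=0$; and your concern about a "shift $k\mapsto k'$ induced by translation by $\na\mu(\xi_k)$" is unfounded --- the operator $T_k$ does not move Fourier supports, spatial translation is invisible to the $\mpqa$-norm, and the reassembly is just the routine bounded-overlap/convolution estimate of Proposition \ref{TH.conv.AlphaModu.}.
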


%Compared with the known results Theorem A,
%we would like to give the following remark to explain that we give a broader space of $\mu$ to get the same boundedness of unimodular multipliers on $\a$-modulation spaces.
%The proof will be given later.

%%%%%%%%%%%%%%%%%%%%%%%%定理
%                            对比已有结果
\begin{remark}\label{TH.Vs.Known}
We would like to point out that our new boundedness result has a wider application range, since
our new assumption $\partial^{\g}\mu\in \scrF M^{1,\i}_{2\al-2s,\a}$ is weaker than the assumptions on $\mu$ in the previous results.
In fact, denote by
\be
N_{2\al-2s}=\{f: |\partial^{\r}(\langle \xi\rangle^{2\al-2s} f)|\leq C_{\gamma}\langle\xi\rangle^{-|\r|}, |\r|\leq[n/2]+1\}.
\ee
The assumption of Theorem A can be stated as follows:
  \begin{equation}
    \partial^{\g}\mu\in N_{2\al-2s} \text{ \, for all multi-index } \g \text{ with } |\g|=2.
  \end{equation}
And, we can verify that $N_{2\al-2s} \subsetneqq \scrF M^{1,\i}_{2\al-2s,\a}$. The proof will be presented at the end of Section 3.
\end{remark}

For simplicity, we only give the proof of Theorem \ref{TH.Bdd.unim.multipl.AlphaModu.} for the cases $\a\in[0,1)$, the proof of $\a=1$ is similar.
The paper is organized as follows.
In Section 2 we list some definitions, lemmas and give some key propositions which will be used lately.
The proof of main theorem will be given in Section 3.
We also give some details for Remark \ref{TH.Vs.Known}.
%In the Appendix at the end of this paper, the convergence of Taylor expansion for the symbol of unimodular multiplier in $\scrF L^1$ is also given.

 %%%%%%%%%%%%%%%%%%%%%%%%%%%%%%%%%%%%%%%%%%%%%%%%%%%%%%%%%%%%%%%%%%%%%%%%%%%%%%%%%%%%%%%%%%%%%%%%%%%%%%%%%%%%%%%%%%%%%%%%%%%%%%%%
 %                                                              第二节 定义和引理
\section{Definitions and Lemmas}
%Let $C$ be a positive constant that may depend on $n,p_i,q_i,s_i,\alpha,\beta$, $i=1,2$.
The notation $X\lesssim  Y$ denotes the statement that $X\leq CY$,
the notation $X\sim Y$ means the statement $X\lesssim Y \lesssim X$.
$L^p$ denotes the usual Lebesgue spaces with $1 \leq p \leq \infty$, and we denote its norm by $\|\cdot\|_{L^p}$.
Let $\mathscr {S}:= \mathscr {S}(\mathbb{R}^{n})$ be the Schwartz space
and $\mathscr {S}':=\mathscr {S}'(\mathbb{R}^{n})$ be the space of tempered distributions.
For $x=(x_1,x_2,...,x_n)\in \mathbb{R}^{n}$,
we denote
\begin{equation*}
|x|:=\bigg( \sum_{j=1}^{n}|x_{j}|^2 \bigg)^{1/2}
\text{ \ \ \ and \ \ }
\langle x\rangle :=\left( 1+|x|^{2} \right)^{1/2}.
\end{equation*}
For a multi-index $\a=(\a_1,\a_2,\cdots,\a_n)$ with $\a_j\in\bbN =\bbZ^+\cup\{0\}$ for $j=1,2,\cdots,n$,
we denote
\[
|\a|:=\sum_{j=1}^n \a_j
\text{ \ \ \ and \ \ }
\a! :=\a_1!\cdot\a_2!\cdot\cdots\cdot\a_n!,
\]
with conventional rules $0!=1$.
The Fourier transform $\scrF f$ and the inverse Fourier transform $\scrF^{-1}f$ of $f\in \mathscr {S}(\mathbb{R}^{n})$ are defined by
$$
\scrF f(\xi)=\hat{f}(\xi)=\int_{\mathbb{R}^{n}}f(x)e^{-2\pi ix\cdot \xi}dx
~~
,
~~
\scrF^{-1}f(x)=\hat{f}(-x)=\int_{\mathbb{R}^{n}}f(\xi)e^{2\pi ix\cdot \xi}d\xi.
$$
We denote $\|f\|_{\scrF L^p}:=\|\scrF^{-1}f\|_{L^p}$.

Next we recall the definition of $\a$-modulation spaces.
First we give the partition of unity  associated with $\alpha\in [0,1)$.
Take two appropriate constants $0<C_1<C_2$ and choose a Schwartz function sequence $\{\eta_k^{\alpha}\}_{k\in \mathbb{Z}^n}$
satisfying
\begin{equation}\label{decompositon function of alpha modulation spaces}
\begin{cases}
|\eta_k^{\alpha}(\xi)|= 1,   \text{~if~} |\xi-\kaak|\leq C_1 \kaa\};\\
\text{supp}\eta_k^{\a}\subset \{\xi\in\mathbb{R}^n: |\xi-\kaak| \leq C_2\kaa\};\\
\sum\limits_{k\in \bbZ^{n}}\eta_k^{\alpha}(\xi)\equiv 1, \forall \xi\in \mathbb{R}^{n};\\
|\partial^{\gamma}\eta_k^{\alpha}(\xi)|\leq C_{|\alpha|}\lkr^{-\frac{\alpha|\gamma|}{1-\alpha}} , \forall \xi\in \mathbb{R}^{n}, \gamma \in\bbN^{n}.
\end{cases}
\end{equation}
The sequence $\{\eta_{k}^{\alpha}\}_{k\in\mathbb{Z}^{n}}$ constitutes a smooth partition of unity of $\mathbb{R}^{n}$.
The  frequency decomposition operators associated with the above function sequence are defined by
\begin{equation}
\Box_{k}^{\alpha}:= \scrF^{-1}\eta_{k}^{\alpha}\scrF, \, k\in \mathbb{Z}^{n}.
\end{equation}
Let $1\leq p,q \leq \infty$, $s\in \mathbb{R}$, $\alpha \in [0,1)$. The $\alpha$-modulation space associated with above decomposition is defined by
\begin{equation}
\mpqsa(\mathbb{R}^n)
=
\bigg\{
  f\in \mathscr {S}'(\mathbb{R}^{n}):
    \|f\|_{\mpqsa(\mathbb{R}^n)}
      =\bigg( \sum_{k\in \mathbb{Z}^{n}}\langle k\rangle ^{\frac{sq}{1-\alpha}}\|\Box_k^{\alpha} f\|_{L^p}^{q}\bigg)^{1/q}<\infty
\bigg\},
\end{equation}
with the usual modifications when $q=\infty$.
The modulation spaces coincides with $\a$-modulation spaces when $\a=0$.
And we have
$\|f\|_{\scrF \mpqsa}
=\|\scrF^{-1} f\|_{\mpqsa}
=\Big( \sum\limits_{k\in \mathbb{Z}^{n}}\langle k\rangle ^{\frac{sq}{1-\alpha}}\|\eta_k^{\a}f\|_{\scrF L^p}^{q}\Big)^{1/q}
$,
with the usual modifications when $q=\infty$.
%For simplicity, we write $M_{p,q}^s=M_{p,q}^{s,0}$, $M_{p,q}=M_{p,q}^{0,0}$ and $\eta_k(\xi)=\eta_k^0(\xi)$.

\begin{remark}
The definition of $\a$-modulation spaces is independent of the choice of exact $\eta_k^{\alpha}$ (see \cite{HanWang2014JMSJ}).
%Also, for sufficiently small $\delta>0$,
%one can construct a function sequence $\{\eta_{k}^{\alpha}\}_{k\in\mathbb{Z}^{n}}$ such that
%$\eta_k^{\alpha}(\xi)=1$, and $\eta_k^{\alpha}(\xi)\eta_l^{\alpha}(\xi)=0$ if $k\neq l$
%and $\xi$ lies in the ball $B(\langle k\rangle^{\frac{\alpha}{1-\alpha}}k,\langle k\rangle^{\frac{\alpha}{1-\alpha}}\delta)$ (see \cite{BorupNielsen2006JMAA, GuoChen2013EJDE}).
\end{remark}
To define the Besov spaces, we introduce  the dyadic decomposition of $\mathbb{R}^n$.
Let $\varphi: \bbR^n\rightarrow [0,1]$ be a smooth bump function supported in the ball $\{\xi: |\xi|\leq3/2\}$ and equal 1 on the ball $\{\xi: |\xi|\leq 4/3\}$.
Denote
\begin{equation}\label{decompositon function of Besov space}
\phi(\xi):=\varphi(\xi)-\varphi(2\xi),
\end{equation}
and a function sequence
\begin{equation}
\begin{cases}
\phi_j(\xi)=\phi(2^{-j}\xi),~j\in \mathbb{Z}^{+},
\\
\phi_0(\xi)=1-\sum\limits_{j\in \mathbb{Z}^+}\phi_j(\xi)=\varphi(\xi).
\end{cases}
\end{equation}
For $j\in \bbN$, we define the Littlewood-Paley operators
\begin{equation}
\Delta_j:=\scrF^{-1}\phi_j\scrF.
\end{equation}
Let $1\leq p,q\leq\infty$, $s\in \mathbb{R}$. For a tempered distribution  $\ f$,  we set the norm
\begin{equation}
\|f\|_{B^{p,q}_s}:=\left(\sum_{j=0}^{\infty}2^{jsq}\|\Delta_jf\|_{L^p}^q \right)^{1/q},
\end{equation}
with the usual modifications when $q=\infty$.
The (inhomogeneous) Besov  space $B^{p,q}_s$ is the space of all tempered distributions $f$ for which the quantity $\|f\|_{B^{p,q}_s}$ is finite.

%\begin{remark}
%Like the $\alpha$-modulation space, the definition of Besov space is independent of the choice of the bump functions $\r$.
%%So one can choose an appropriate $\r$ as one needs.
%Also, one can verify that the function sequence  $\{\phi_j(\xi)\}_{j\in \bbN}$ satisfies
%\begin{equation}
%\begin{cases}
%\text{supp}\phi \subset \{\xi\in \mathbb{R}^n: 2/3\leq |\xi|\leq 3/2\},
%\\
%\phi_0(\xi)=1~\text{and}~\phi_0(\xi)\phi_l(\xi)=0,~\xi\in B(0,\delta), l\neq 0,
%\\
%\phi_j(\xi)=1~\text{and}~\phi_j(\xi)\phi_l(\xi)=0,~2^j-2^j\delta \leq |\xi| \leq 2^j+2^j\delta,~l\neq j,
%\end{cases}
%\end{equation}
%for $l,j\in \mathbb{Z}^+$, where $\delta=1/4$.
%\end{remark}

We now list some basic properties about $\alpha$-modulation spaces and Besov spaces.
As mentioned before, Besov space can be regarded as the limit case of $\a$-modulation space as $\a\rightarrow1$, so we also use $M^{p,q}_{s,1}$ to denote the (inhomogeneous) Besov space $B^{p,q}_s$.

%%%%%%%%%%%%%%%%%%%%%%%%引理
%%                           Young Inequality
%\begin{lemma}[$L^p$($0<p\leq 1$) with compact Fourier support is a convolution algebra]
%\label{L. Young inequality}
%   Let $0<p\leqslant 1$, $R>0$, $\mathrm{supp}{\wh{f}},~\mathrm{supp}{\wh{g}}\subset B(x,R)\subset \mathbb{R}^n$.
%   We have
%   \begin{equation}
%   \|f*g\|_{L^p} \leqslant C R^{n(1/p-1)} \|f\|_{L^p} \|g\|_{L^p},
%   \end{equation}
%   where $C$ is independent of $f,g$ and $x\in \mathbb{R}^n$.
% %\begin{enumerate}
%%   \item Let $0<p\leqslant 1$, $R>0$, $\mathrm{supp}{\hat{f}},~\mathrm{supp}{\hat{g}}\subset B(x,R)\subset \mathbb{R}^n$.
%%   We have
%%   \begin{equation}
%%   \|f*g\|_{L^p} \leqslant C R^{n(1/p-1)} \|f\|_{L^p} \|g\|_{L^p},
%%   \end{equation}
%%   where $C$ is independent of $f,g$ and $x\in \mathbb{R}^n$.
%%   \item Let $1\leqslant p, q, r\leqslant \infty$ satisfy $1+{1}/{q}={1}/{p}+{1}/{r}$. Then we have
%%   \begin{equation}
%%   \|f*g\|_{L^q} \lesssim \|f\|_{L^p} \|g\|_{L^r}.
%%   \end{equation}
%% \end{enumerate}
%
%\end{lemma}
%This assertion can be referred in\cite[p.237]{Peetre1976} or \cite[p.28]{Triebel1983}.
%                            Bernstein 不等式
\begin{lemma}[Bernstein multiplier theorem, see \cite{ZhaoFanGuo2016MN}]\label{L.BernsteinIneq.}
  Let $0<p\leq 2$. Assume $\partial^\gamma f\in L^2$ for all multi-indices $ \gamma $ with $|\gamma|\leq [n(1/p-1/2)]+1 $,
  where $[t]$ denotes the integer part of $t\in\mathbb{R}$. We have
\begin{equation}
\|\scrF^{-1}f\|_{L^p} \lesssim \sum_{\scriptscriptstyle{|\gamma|\leq \left[n(1/p \! - \! 1/2)\right]+1}} \|\partial^\gamma f\|_{L^2}.
\end{equation}

\end{lemma}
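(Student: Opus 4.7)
The plan is to reduce the $L^p$-bound on $\scrF^{-1}f$ to an $L^2$-bound on weighted versions of $\scrF^{-1}f$ via Hölder's inequality, and then convert those weighted $L^2$-bounds into norms of derivatives of $f$ via Plancherel. Let $N:=[n(1/p-1/2)]+1$. The case $p=2$ forces $N=1$ and the inequality is immediate from Plancherel, so I will assume $0<p<2$ throughout.

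The first step is to split $\scrF^{-1}f(x)=\langle x\rangle^{-N}\cdot\langle x\rangle^{N}\scrF^{-1}f(x)$ and apply Hölder's inequality with conjugate exponents $r$ and $2$, where $\tfrac{1}{p}=\tfrac{1}{r}+\tfrac{1}{2}$, i.e.\ $r=\tfrac{2p}{2-p}>0$. This yields
\[
\|\scrF^{-1}f\|_{L^p}\leq \|\langle x\rangle^{-N}\|_{L^{r}}\,\|\langle x\rangle^{N}\scrF^{-1}f\|_{L^2}.
\]
Integrability of $\langle x\rangle^{-Nr}$ over $\bbR^n$ requires $Nr>n$, equivalently $N>n(\tfrac{1}{p}-\tfrac{1}{2})$, which is exactly why the threshold $N=[n(\tfrac{1}{p}-\tfrac{1}{2})]+1$ is chosen (the $+1$ ensures strict inequality even when $n(1/p-1/2)$ happens to be an integer).

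The second step expands $\langle x\rangle^{2N}=(1+|x|^2)^{N}$ as a polynomial of degree $2N$ in $x$, giving $\langle x\rangle^{2N}\lesssim \sum_{|\gamma|\leq N} x^{2\gamma}$. Hence
\[
\|\langle x\rangle^{N}\scrF^{-1}f\|_{L^2}^{2}\lesssim \sum_{|\gamma|\leq N}\|x^{\gamma}\scrF^{-1}f\|_{L^2}^{2}.
\]
Using the paper's convention $\hat f(\xi)=\int f(x)\me^{-2\pi\mi x\cdot\xi}\md x$, one gets $x^{\gamma}\scrF^{-1}f=(2\pi\mi)^{-|\gamma|}\scrF^{-1}(\partial^{\gamma}f)$, so by Plancherel $\|x^{\gamma}\scrF^{-1}f\|_{L^2}\lesssim \|\partial^{\gamma}f\|_{L^2}$. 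Inserting this bound and dominating $(\sum a_j^2)^{1/2}$ by $\sum|a_j|$ produces the stated inequality.

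The only subtle point is the choice of the threshold $N$, justified above. A secondary technicality concerns $p<1$, where $L^p$ is only a quasi-norm; but Hölder's inequality in the form used remains valid, as one sees directly from $\int|fg|^{p}\leq \bigl(\int|f|^{r}\bigr)^{p/r}\bigl(\int|g|^{2}\bigr)^{p/2}$ with the exponents $r/p$ and $2/p$, both larger than $1$ since $0<p<2$. Beyond these observations the argument is bookkeeping, and the threshold $N$ is sharp in the sense that any smaller power of the weight makes $\|\langle x\rangle^{-N}\|_{L^r}$ diverge.
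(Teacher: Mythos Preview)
Your proof is correct and is precisely the standard argument for this Bernstein-type inequality: split off the weight $\langle x\rangle^{-N}$, apply H\"older with $\tfrac1p=\tfrac1r+\tfrac12$, check that $N=[n(\tfrac1p-\tfrac12)]+1$ forces $\langle x\rangle^{-N}\in L^r$, and then use Plancherel to trade $x^{\gamma}$ on the physical side for $\partial^{\gamma}$ on the frequency side. Your handling of the quasi-Banach range $0<p<1$ is also fine.

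The paper does not actually supply a proof of this lemma; it is stated with a citation to \cite{ZhaoFanGuo2016MN} and used as a black box. So there is nothing to compare against here beyond noting that what you wrote is the expected argument behind the cited result.
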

Using Bernstein multiplier theorem, we give some $\scrF L^1$-norm estimates of the decomposition function $\eta_k^\a$.
We adopt the following notation for convenience:
\[\Upsilon_k^{\a}(f)(\xi):=f(\kaa\xi+\kaak).\]
\begin{proposition}[Estimate of the decomposition function]\label{Prop.eta-k-a in FL1}
  Let $\{\eta_k^\a\}_{k\in\bbZ^n}$ be a smooth decomposition of $\bbR^n$ satisfying (\ref{decompositon function of alpha modulation spaces}).
  Then there exist constants $c$ and $C$, such that for all $k\in\bbZ^n$,
  \begin{enumerate}
    \item uniform support:%$\Upsilon _k^{\a}\eta_k^\a$ can be supported in a fixed ball, i.e.
          \[\text{supp } \Upsilon_k^{\a}(\eta_k^\a)=\text{supp }\eta_k^\a(\kaa\cdot+\kaak) \subset B(0,c);\]
          %where $c$ is independent with $k$;
    \item uniform $\scrF L^1$ bound: %the $\scrF L^1$ norms of $\eta_k^\a$ have a uniform bound, i.e.
            \[\|\Upsilon_k^{\a}(\eta_k^\a)\|_{\scrF L^1}=\|\eta_k^\a\|_{\scrF L^1}\leq C.\]
            %where $C$ is independent of $k\in\bbZ^n$.
  \end{enumerate}

\end{proposition}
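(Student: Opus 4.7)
The plan is to reduce both assertions to a single normalized function by exploiting the dilation-translation invariance of the $\scrF L^1$ norm, and then apply the Bernstein multiplier theorem (Lemma \ref{L.BernsteinIneq.}) to the normalized pieces.

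For part (1), I would just unwind the definition. Since $\text{supp}\,\eta_k^\a\subset\{\xi:|\xi-\kaak|\leq C_2\kaa\}$ by \eqref{decompositon function of alpha modulation spaces}, the change of variables $\xi=\kaa\zeta+\kaak$ shows that $\Upsilon_k^\a(\eta_k^\a)$ is supported where $|\zeta|\leq C_2$, so $c:=C_2$ works uniformly in $k$.

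For part (2), the first equality $\|\Upsilon_k^\a(\eta_k^\a)\|_{\scrF L^1}=\|\eta_k^\a\|_{\scrF L^1}$ is a direct consequence of the fact that an affine change of Fourier variable $f(\xi)\mapsto f(\la\xi+b)$ corresponds on the physical side to a modulation composed with a dilation; a short computation with the inverse Fourier transform shows the Jacobian exactly cancels and the resulting phase has modulus one, so $\|\cdot\|_{L^1}$ of $\scrF^{-1}$ is preserved. It then suffices to bound $\|\Upsilon_k^\a(\eta_k^\a)\|_{\scrF L^1}$ uniformly in $k$. I would apply Lemma \ref{L.BernsteinIneq.} with $p=1$, which gives
\[
\|\Upsilon_k^\a(\eta_k^\a)\|_{\scrF L^1}\lesssim \sum_{|\g|\leq[n/2]+1}\|\pa^\g\Upsilon_k^\a(\eta_k^\a)\|_{L^2}.
\]
By the chain rule, $\pa^\g\Upsilon_k^\a(\eta_k^\a)(\zeta)=\lkr^{\a|\g|/(1-\a)}(\pa^\g\eta_k^\a)(\kaa\zeta+\kaak)$, and the derivative estimate in \eqref{decompositon function of alpha modulation spaces} says exactly $|\pa^\g\eta_k^\a|\lesssim\lkr^{-\a|\g|/(1-\a)}$. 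These two factors cancel, leaving a uniformly bounded function supported in $B(0,c)$, whose $L^2$ norm is therefore controlled by a constant independent of $k$.

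The only genuinely delicate point is verifying that $\scrF L^1$ is invariant under the dilation-translation $\Upsilon_k^\a$; everything else is a mechanical application of the support and derivative bounds built into \eqref{decompositon function of alpha modulation spaces}. Once that invariance is in hand, the $\a$-scaling in the frequency side is precisely calibrated against the derivative loss in the bump $\eta_k^\a$, which is exactly why the uniform estimate survives.
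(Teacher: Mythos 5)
Your proof is correct and follows essentially the same route as the paper: part (1) from the support condition in \eqref{decompositon function of alpha modulation spaces}, and part (2) by the affine invariance of the $\scrF L^1$ norm followed by Lemma \ref{L.BernsteinIneq.} with $p=1$, where the dilation factor $\lkr^{\a|\g|/(1-\a)}$ cancels the derivative decay $\lkr^{-\a|\g|/(1-\a)}$. The only cosmetic difference is that you apply Bernstein to the fully normalized function $\Upsilon_k^\a(\eta_k^\a)$ (so the Jacobian and support-volume factors cancel automatically), whereas the paper applies it to the dilated-but-untranslated $\eta_k^\a(\lkr^{\a/(1-\a)}\cdot)$ and tracks the factors $\lkr^{\frac{\a}{1-\a}(|\g|-n/2)}$ and $\lkr^{\frac{n\a/2}{1-\a}}$ explicitly.
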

\begin{proof}
  The first conclusion can be derived directly
  by the definition of $\{\eta_k^{\a}\}_{k\in\bbZ^n}$ in (\ref{decompositon function of alpha modulation spaces}).
  We turn to prove the second one.
 Using Lemma \ref{L.BernsteinIneq.}, we have
  \begin{align*}
    \|\Upsilon_k^{\a}(\eta_k^\a)\|_{\scrF L^1}
    &=\|\eta_k^\a\|_{\scrF L^1}
        = \|\eta_k^\a(\lkr^{\aa}\cdot)\|_{\scrF L^1}
    \\
    &
    \lesssim
    \sum_{\scriptscriptstyle{|\gamma|\leq \left[n(1/p \! - \! 1/2)\right]+1}}
      \left\| \partial^\gamma \left(\eta_k^\a(\lkr^{\aa}\cdot)\right) \right\|_{L^2}
    \\
    &
    \lesssim
    \sum_{\scriptscriptstyle{|\gamma|\leq \left[n(1/p \! - \! 1/2)\right]+1}}
      \lkr^{\aa(|\gamma|-n/2)}
      \cdot
      \left\| \partial^\gamma \eta_k^\a \right\|_{L^{\i}}\lkr^{\frac{n\al/2}{1-\a}}
    \lesssim1,
    %\lkr^{\aa(|\gamma|-\frac{1}{2})}
%     \sum_{\scriptscriptstyle{|\gamma|\leq \left[n(\frac{1}{p} \! - \! \frac{1}{2})\right]+1}}
%      C_{|\a|}\lkr^{\aa(|\gamma|-\frac{1}{2})}.
  \end{align*}
  where the last inequality we use the derivative property and support information of $\eta_k^{\a}$ as mentioned in (\ref{decompositon function of alpha modulation spaces}).
\end{proof}
%                            卷积不等式
\begin{proposition}[Convolution of $\alpha$-modulation space]\label{TH.conv.AlphaModu.}
  Let $1\leq p, q \leq \infty$, $s\in\bbR$, $\alpha\in[0,1]$.
  Then we have
  \begin{equation}
    \mpqsa\ast M^{1,\infty}_{-s,\al}\subset\mpqa.
  \end{equation}
\end{proposition}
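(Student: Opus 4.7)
The plan is to exploit the almost-orthogonality of the $\a$-decomposition on the frequency side. Writing $f=\sum_{j}\Box_j^\a f$ and $g=\sum_{l}\Box_l^\a g$, I expand by linearity
\[
\Box_k^\a(f\ast g)=\sum_{j,l\in\bbZ^n}\Box_k^\a(\Box_j^\a f\ast \Box_l^\a g),
\]
and observe that each summand has Fourier transform supported in $\mathrm{supp}\,\eta_k^\a\cap\mathrm{supp}\,\eta_j^\a\cap\mathrm{supp}\,\eta_l^\a$. By the admissibility of the $\a$-covering $\{B(\kaak,C_2\kaa)\}_{k\in\bbZ^n}$, for each fixed $k$ only $O(1)$ pairs $(j,l)$ give a nontrivial contribution, and on this adjacency set $\langle j\rangle\sim\langle l\rangle\sim\lkr$ with constants depending only on $\a$ and $n$.

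For a single admissible triple I apply Young's inequality twice. Writing $\Box_k^\a h=(\scrF^{-1}\eta_k^\a)\ast h$ and combining the uniform bound $\|\scrF^{-1}\eta_k^\a\|_{L^1}\lesssim 1$ from Proposition \ref{Prop.eta-k-a in FL1} with the elementary $\|\Box_j^\a f\ast \Box_l^\a g\|_{L^p}\leq \|\Box_j^\a f\|_{L^p}\|\Box_l^\a g\|_{L^1}$ gives
\[
\|\Box_k^\a(\Box_j^\a f\ast \Box_l^\a g)\|_{L^p}\lesssim \|\Box_j^\a f\|_{L^p}\|\Box_l^\a g\|_{L^1}.
\]
Trading weights via $\|\Box_l^\a g\|_{L^1}\leq \langle l\rangle^{s/(1-\a)}\|g\|_{M^{1,\i}_{-s,\a}}\lesssim \lkr^{s/(1-\a)}\|g\|_{M^{1,\i}_{-s,\a}}$ and summing over the finitely many admissible pairs yields
\[
\|\Box_k^\a(f\ast g)\|_{L^p}\lesssim \lkr^{s/(1-\a)}\|g\|_{M^{1,\i}_{-s,\a}}\sum_{j\,\sim\,k}\|\Box_j^\a f\|_{L^p}.
\]

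Finally, I take the $\ell^q$ norm in $k$, with the usual modification for $q=\i$. Applying the discrete triangle inequality over the $O(1)$ inner terms and swapping $\sum_k\sum_{j\sim k}=\sum_j\sum_{k\sim j}$, using the finite overlap together with $\lkr\sim\langle j\rangle$ on the swap, I arrive at
\[
\|f\ast g\|_{\mpqa}\lesssim \|g\|_{M^{1,\i}_{-s,\a}}\Big(\sum_j\langle j\rangle^{sq/(1-\a)}\|\Box_j^\a f\|_{L^p}^q\Big)^{1/q}=\|g\|_{M^{1,\i}_{-s,\a}}\|f\|_{\mpqsa}.
\]
The case $\a=1$ reduces to the classical Besov convolution estimate via the dyadic decomposition $\{\phi_j\}$ and follows from the same scheme. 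The main technical point is checking that the admissibility set $\{(j,l):\mathrm{supp}\,\eta_k^\a\cap\mathrm{supp}\,\eta_j^\a\cap\mathrm{supp}\,\eta_l^\a\neq\emptyset\}$ is of uniformly bounded cardinality and that $\langle j\rangle\sim\langle l\rangle\sim\lkr$ on it, which is a careful but standard consequence of the $\a$-admissibility and requires treating the regimes $\lkr\lesssim 1$ and $\lkr\gg 1$ separately.
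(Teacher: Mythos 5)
Your proof is correct, and it rests on the same ingredients as the paper's: finite overlap of the $\a$-covering, the comparability $\langle j\rangle\sim\langle\l\rangle\sim\lkr$ on adjacent indices, Young's inequality, and the uniform bound $\|\scrF^{-1}\eta_k^\a\|_{L^1}\lesssim1$. The organization, however, is different. You decompose \emph{both} factors, which forces you to verify that only $O(1)$ pairs $(j,\l)$ survive for each $k$, to control the inner sum $\big(\sum_{j\sim k}\|\Box_j^\a f\|_{L^p}\big)^q$ using that finite cardinality, and to swap $\sum_k\sum_{j\sim k}$ at the end. The paper avoids the double sum altogether: with $\G_k:=\{\l\in\bbZ^n:\bka\Box_\l^\a\neq0\}$ and $\bkat:=\sum_{\l\in\G_k}\Box_\l^\a$, the partition of unity gives the exact identity
\[
\bka(f\ast g)=\bka f\ast\bkat g,
\]
since $\sum_{\l\in\G_k}\eta_\l^\a\equiv1$ on $\mathrm{supp}\,\eta_k^\a$; thus $f$ stays at the single index $k$ and only $g$ acquires the (finitely) enlarged projection. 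After Young's inequality, the paper's proof then closes in one step by H\"older's inequality for sequences, $\|a_kb_k\|_{\ell^q}\leq\|a_k\|_{\ell^q}\|b_k\|_{\ell^\infty}$, the covering facts being needed only to bound $\sup_k\lkr^{-\frac{s}{1-\a}}\|\bkat g\|_{L^1}$ by a constant times $\|g\|_{M^{1,\infty}_{-s,\a}}$. So both arguments are complete and rely on the same "standard" admissibility facts you defer to the end, but the paper's reproducing identity eliminates the pair-counting and index-swapping bookkeeping that your route requires; conversely, your double-decomposition scheme is the more generic one and would adapt unchanged to situations where no such exact identity between projections is available.
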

%\begin{proposition}[Convolution of $\alpha$-modulation space]\label{TH.conv.AlphaModu.-General}
%  Let $0< p, q \leq \infty$, $s\in\bbR$ and $\alpha\in[0,1]$.
%  Then we have
%  \begin{equation}
%    \mpqsa\ast M^{\pd,\infty}_{\nap-s,\a}\subset\mpqa.
%  \end{equation}
%\end{proposition}
\begin{proof}
  For $k\in\bbZ^n$, denote  $\G_k:=\{ \l\in\bbZ^n |\, \bka\Box_{\l}^{\a}\neq 0\}$,
  $\wt{\eta}_k^{\a}:=\sum\limits_{\l\in\G_k}\eta_{\l}^\a$
  and
  \[
    \bkat=\sum_{\l\in\G_k} \Box_{\l}^\a.
  \]
  We have  $\text{supp}\eta_k^{\a}\subset\text{supp}\wt{\eta}_k^{\a}\subset B(\kaak, C\kaa)$ with some fixed constant $C$ for all $k\in\bbZ^n$.
  and
  $\bka(f\ast g)
  =\scrF^{-1}(\eta_k^{\a}\cdot\wh{f}\cdot\wh{g})
  =\scrF^{-1}(\eta_k^{\a}\cdot\wh{f}\cdot\wt{\eta}_k^{\a} \cdot \wh{g})
  =\bka f \ast \bkat g%=\bka \text{ for } k\in\bbZ^n.
  $.
  %From the definition of $\al$-modulation space,
  %By Lemma \ref{L. Young inequality} % and H\"{o}lder inequality,
  Using Young inequality
  we have
  \begin{align*}
    \|f\ast g\|_{\mpqa}
    &
    =\|\{\|\bka(f\ast g)\|_{\lp}\}_{k\in\bbZ^n}\|_{l^q}
     =\|\{\|\bka f\ast \bkat g\|_{\lp}\}_{k\in\bbZ^n}\|_{l^q}
    \\
    &
    \lesssim
     \|
       \{ %\lkr^{\aan \left(1/{\pd}-1\right)}
            \|\bka f\|_{\lp}
          \cdot  \|\bkat g\|_{L^1}
       \}_{k\in\bbZ^n}
     \|_{l^q}
    \\
    &\leq
     \|\{  \lkr^{\frac{s}{1-\a}} \|\bka f\|_{\lp}  \}_{k\in\bbZ^n} \|_{l^q}
     \cdot
     \|\{  %\lkr^{\aan \left(1/{\pd}-1\right)}
           \lkr^{-\frac{s}{1-\a}} \|\bkat g\|_{L^1}
       \}_{k\in\bbZ^n}
     \|_{l^{\infty}}
    \\
    &\sim
    \|f\|_{\mpqsa} \|g\|_{M^{1,\infty}_{-s,\a}}.
  \end{align*}
\end{proof}

%%%%%%%%%%%%%%%%%%%%%%%%%%%%%%%%%%%%%%%%%%%%%%%%%%%%%%%%%%%%%%%%%%%%%%%%%%%%%%%%%%%%%%%%%%%%%%%%%%%%%%%%%%%%%%%%%%%%%%%%%%%%%%%%%
%                                                               第三节 定理的主要证明
\section{Boundedness of $\me^{\mi\mu(D)}$ on $\a$-modulation space}

%%%%%%%%%%%%%%%%%%%%%%%%%%%%%%%%%%%%%%%%%%%%%%%%%%%%%%%%%%%%%%%%%%%%%%%%%%%%%%%%%%%%%%%%%%%%%%%%%%%%%%%%%%%%%%%%%%%%%%%%%%%%%%%%%%%%%%%%%%%%%%%%%%%%%%%%
%                        带位势\la的alpha有界性证明
%
This section is devoted to the boundedness of unimodular multipliers on $\alpha$-modulation space.
We first establish a bounded results, which is sharp at two endpoints  $p=1$ and $p=\i$.
Then the desired conclusion follows by an interpolation between this and the obvious boundedness of $M^{2,q}_{0,\al}$.
\begin{theorem}[Boundedness for endpoints]\label{TH.Bdd.unim.multipl.AlphaModu.-Special}
  Let $1\leq p,q\leq\infty$, $s\in\bbR$, $\al\in[0,1]$ and $\mu\in C^2(\bbR^n)$ be a real-valued function satisfying
  \begin{equation}\label{condi.mu-Special}
    \partial^{\g}\mu\in \scrF M^{1,\i}_{2\a-2s,\a}, \text{ \, for all multi-index } \g \text{ with } |\g|=2.
  \end{equation}
  Then
  \begin{equation}
    \me^{\mi\mu(D)}:\mpqda\rightarrow\mpqa
  \end{equation}
  is bounded with $\d\geq\max\{s, 0\}$.
\end{theorem}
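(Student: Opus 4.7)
The plan is to realize $\me^{\mi\mu(D)}$ as convolution with the kernel $K:=\scrF^{-1}(\me^{\mi\mu})$ and to invoke the convolution embedding of Proposition~\ref{TH.conv.AlphaModu.}, namely $\mpqda\ast M^{1,\i}_{-\d n,\a}\subset\mpqa$ with $\d=\max\{s,0\}$. The theorem then reduces to verifying that $K\in M^{1,\i}_{-\d n,\a}$, or equivalently, to the uniform kernel estimate
\[
  \|\eta_k^{\a}\me^{\mi\mu}\|_{\scrF L^1}\lesssim\lkr^{\f{n\max\{s,0\}}{1-\a}},\qquad k\in\bbZ^n.
\]

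For this estimate I would sub-partition $\mathrm{supp}\,\eta_k^{\a}\subset B(\kaak,C\kaa)$ at a finer scale $\rho_k:=\kaa\min\{1,\lkr^{-s/(1-\a)}\}$. Cover the support by $\lesssim\lkr^{n\max\{s,0\}/(1-\a)}$ balls $B_j$ of radius $\rho_k$ with a subordinate smooth partition of unity $\{\phi_j\}$. On each $B_j$, centered at $\eta_j$, a second-order Taylor expansion of $\mu$ about $\eta_j$ peels off a constant phase $\me^{\mi\mu(\eta_j)}$ and a linear phase $\me^{\mi\nabla\mu(\eta_j)\cdot(\eta-\eta_j)}$ (a pure spatial translation); since both are $\scrF L^1$-isometries, each summand of
\[
  \|\eta_k^{\a}\me^{\mi\mu}\|_{\scrF L^1}\leq\sum_j\|\eta_k^{\a}\phi_j\me^{\mi\mu}\|_{\scrF L^1}
\]
reduces to $\|\eta_k^{\a}\phi_j\me^{\mi R_{k,j}}\|_{\scrF L^1}$, where $R_{k,j}(\eta)=\sum_{|\g|=2}\f{2(\eta-\eta_j)^{\g}}{\g!}\int_0^1(1-t)\pa^{\g}\mu(\eta_j+t(\eta-\eta_j))\md t$ is the quadratic Taylor remainder.

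The core estimate is a uniform $\scrF L^1$-bound on $R_{k,j}$ localized to $B_j$. Combining its integral representation with the Wiener-algebra property $\|fg\|_{\scrF L^1}\leq\|f\|_{\scrF L^1}\|g\|_{\scrF L^1}$, the translation-dilation invariance of $\|\cdot\|_{\scrF L^1}$, and the hypothesis $\pa^{\g}\mu\in\scrF M^{1,\i}_{2\a-2s,\a}$ (which, after localization to the $O(1)$ many $\a$-windows $\eta_\l^{\a}$ meeting $B_j$, for indices with $\langle\l\rangle\sim\lkr$, gives an $\scrF L^1$-bound of order $\lkr^{(2s-2\a)/(1-\a)}$), I expect to obtain
\[
  \|\wt\psi_j R_{k,j}\|_{\scrF L^1}\lesssim\rho_k^2\lkr^{\f{2s-2\a}{1-\a}}\lesssim 1,
\]
by the choice of $\rho_k$, for any cutoff $\wt\psi_j=1$ on $\mathrm{supp}(\eta_k^{\a}\phi_j)$. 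Expanding $\me^{\mi R_{k,j}}$ as a power series and applying the algebra property term by term then yields $\|\wt\psi_j\me^{\mi R_{k,j}}\|_{\scrF L^1}\lesssim 1$; combining this with Proposition~\ref{Prop.eta-k-a in FL1} to bound $\|\eta_k^{\a}\phi_j\|_{\scrF L^1}\lesssim 1$ produces the per-piece estimate $\|\eta_k^{\a}\phi_j\me^{\mi R_{k,j}}\|_{\scrF L^1}\lesssim 1$, and summation over the $\lesssim\lkr^{n\max\{s,0\}/(1-\a)}$ sub-balls closes the argument.

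The main obstacle is the delicate calibration of $\rho_k$: the scale must be small enough that the localized quadratic remainder is uniformly $O(1)$ in $\scrF L^1$ (not merely in $L^{\i}$, since it is exactly the Wiener-algebra structure of the hypothesis on the Hessian, rather than pointwise estimates, that closes the power-series expansion of $\me^{\mi R_{k,j}}$), yet large enough to keep the sub-ball count at the polynomial weight $\lkr^{n\max\{s,0\}/(1-\a)}$. In the modulation case ($\a=0$) of Nicola--Primo--Tabacco, the covering width is uniformly $O(1)$ and this tension collapses; for $\a\in(0,1)$ the widths $\kaa$ grow with $k$, so the sub-partition together with the dilation-invariant analysis encoded in Propositions~\ref{Prop.eta-k-a in FL1} and~\ref{TH.conv.AlphaModu.} are precisely what refines the Nicola--Primo--Tabacco strategy to the non-uniform $\a$-covering.
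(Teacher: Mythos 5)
Your proposal is correct and follows essentially the same route as the paper: the same reduction via Proposition~\ref{TH.conv.AlphaModu.} to the kernel bound $\sup_{k}\lkr^{-\f{\d n}{1-\a}}\|\eta_k^{\a}\me^{\mi\mu}\|_{\scrF L^1}<\infty$, the same sub-decomposition of $\mathrm{supp}\,\eta_k^{\a}$ into $\sim\lkr^{n\max\{s,0\}/(1-\a)}$ pieces of diameter $\sim\kaa\lkr^{-\max\{s,0\}/(1-\a)}$ (exactly your $\rho_k$), and the same per-piece argument (second-order Taylor expansion, removal of the affine phase, Wiener-algebra bound on the localized quadratic remainder using the hypothesis on the $O(1)$ windows with $\llr\sim\lkr$, then the power-series/exponential estimate and the count of pieces). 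The only difference is bookkeeping: the paper realizes the finer decomposition by dilating the window, writing $\|\eta_k^{\a}\me^{\mi\mu}\|_{\scrF L^1}=\|\eta_k^{\a}(\lkr^{-\d/(1-\a)^2}\cdot)\me^{\mi\mu_k}\|_{\scrF L^1}$ and intersecting with the canonical partition $\{\eta_{\l}^{\a}\}_{\l\in E_k}$, so that Proposition~\ref{Prop.eta-k-a in FL1} supplies the uniform $\scrF L^1$ bounds and the piece count automatically, whereas you build an ad hoc covering by $\rho_k$-balls with its own partition of unity.
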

\begin{proof}
It is sufficient to give the proof for the case $\d=\max\{s, 0\}$,
since the other cases can be obtained by the simple embedding relation $M^{p,q}_{t_1,\a}\subset M^{p,q}_{t_2,\a}$ ($t_2\leq t_1$).
Write $\me^{\mi\mu(D)} f  =  \scrF^{-1}\me^{\mi\mu} \ast f$.
By Proposition \ref{TH.conv.AlphaModu.}, the desired conclusion is valid if we prove
$\|\scrF^{-1}\me^{\mi\mu}\|_{M^{1,\infty}_{-\d n,\a}}<\infty$, i.e.
\begin{equation}\label{TH.Bdd.unim.multipl.AlphaModu.-Special-for proof 1}
  \sup_{k\in\bbZ^n} \lkr^{-\f{\d n}{1-\a}} \|\eta_k^\a \cdot \me^{\mi\mu(\cdot)} \|_{\scrF L^1}<\infty.
\end{equation}
Denote
\[
E_k:=
   \left\{
     \l\in\bbZ^n\bigg|
       \eta_\l^\a\left(\cdot\right)
       \cdot
       \eta_k^\a\left(\lkr^{\f{-\d}{(1-\a)^2}}\cdot\right)
     \neq0
   \right\}.
\]
Recall that $\{\eta_k^\a\}_{k\in\bbZ^n}$ is a partition of unity on $\bbR^n$, we have
\[
\eta_k^\a\Big(\lkr^{\f{-\d}{(1-\a)^2}}\xi\Big)
=
\sum_{\l\in E_k}
 \eta_\l^\a(\xi)
 \eta_k^\a\Big(\lkr^{\f{-\d}{(1-\a)^2}}\xi\Big).
\]
Then we have the the following assertion:
%So by a direct computation we have the cardinality of $E_k$: $\#E_k\sim\lkr^{sn/(1-\a)}$ when $s\geq0$ and $\#E_k\sim1$ when $s<0$, i.e.
\begin{equation}\label{Pf.numb.Ek-Special}
  \llr\sim\lkr^{1+\d/(1-\a)} \text{ for all }\l\in E_k,
  %\hspace{2mm}
  \text{ and }
  \#E_k\sim\lkr^{\d n/(1-\a)}
\end{equation}
where $\#E_k$ is the cardinality of $E_k$, and that
\begin{equation}\label{Pf.distance.Ek-Special}
  | \kaak  -  \lkr^{\f{-\d}{(1-\a)^2}}\laal |
  \lesssim
  \kaa
  \text{ for all }\l\in E_k.
\end{equation}
In fact,
when $\l\in E_k$, the support sets of $\eta_\l^\a\left(\xi\right)$ and $\eta_k^\a\left(\lkr^{\f{-\d}{(1-\a)^2}}\xi\right)$  must be intersected,
then we have
$  \llr^{\frac{1}{1-\a}}
\sim
\ka \cdot \lkr^{\f{\d}{(1-\a)^2}} $,
i.e.
$\langle \l \rangle
\sim
\lkr^{1+\d/(1-\a)}$.
%So the volumes of $\textrm{supp}\eta_{\l}^{\a}$ are all at the same order of magnitude for $\l\in E_k$.
%And we have the cardinality of  $E_k$ is the ratio of $\text{supp}\eta_k^\a\Big(\lkr^{\f{-\d}{(1-\a)^2}}\xi\Big)$ and $\text{supp}\eta_\l^\a\left(\xi\right)$ with $\l\in E_k$,
%i.e.
%the ratio of the semi-diameter of those support sets to the power of $n$,
%so we have
So $|\text{supp}\eta_{\l}^{\a}| \sim \big(\laa\big)^n \sim \Big(\lkr^{\aa+\f{\a\d}{(1-\a)^2}}\Big)^n$ for all $\l \in E_k$.
This and the almost orthogonality of $\{\eta_{\l}^{\a}\}_{\l\in E_k}$ yield that
\[
  \#E_k
  \sim
    \f{ \big|\text{supp} \eta_k^\a\big(\lkr^{\f{-\d}{(1-\a)^2}}\cdot\big)\big| }{ |\text{supp}\eta_{\l}^{\a}| }
  \sim
    \Bigg( \f{ \kaa\lkr^{\f{\d}{(1-\a)^2}} }{ \lkr^{\aa+\f{\a\d}{(1-\a)^2}} } \Bigg)^n
  \sim
    \lkr^{\d n/(1-\a)}.
\]
Furthermore, when $\l\in E_k$, using \eqref{Pf.numb.Ek-Special} and the position relation between $\eta_\l^\a$ and
 $\eta_k^\a\Big(\lkr^{\f{-\d}{(1-\a)^2}}\xi\Big)$,
we get
\[
| \lkr^{\f{\d}{(1-\a)^2}} \kaak  -  \laal |
\lesssim
\lkr^{\f{\d}{(1-\a)^2}} \kaa+\laa
\sim
 \kaa (\lkr^{\f{\d}{(1-\a)^2}}+\lkr^{\f{\a\d}{(1-\a)^2}}).
\]
This and the fact $\d\geq0$ imply that
\[
|  \kaak  - \lkr^{\f{-\d}{(1-\a)^2}} \laal |
\lesssim
 \kaa (1+\lkr^{\f{(\al-1)\d}{(1-\a)^2}})
\lesssim
\kaa.
\]
So we get (\ref{Pf.distance.Ek-Special}).

Denote by $\Upsilon_{k}^{\a}(f)(\xi):=f(\kaa\xi+\kaak)$ and
\[
\mu_k(\xi):=\mu\Big(\lkr^{\f{-\d}{(1-\a)^2}}\xi\Big),
\hspace{5mm}
\mu_{k,\l}(\xi):=\mu_k\Big(\llr^\aa\xi+ \laal\Big).
\]
Then the scaling invariance of $\scrF L^1$ and Young's inequality yield that
\begin{align}\label{Pf.multipl.0-Special}
  \|\eta_k^\a \cdot  \me^{\mi\mu(\xi)}\|_{\scrF L^1}
  &
  =
  \Big\| \eta_k^\a\Big(\lkr^{\f{-\d}{(1-\a)^2}}\xi\Big)
   \cdot  \me^{\mi\mu_k(\xi)}\Big\|_{\scrF L^1}
  =
  \bigg\|\sum_{\ell\in E_k}\eta_{\ell}^\a(\xi)
   \cdot \eta_k^\a\Big(\lkr^{\f{-\d}{(1-\a)^2}}\xi\Big)
   \cdot \me^{\mi\mu_k(\xi)}\bigg\|_{\scrF L^1}
   \nonumber
  \\
  &\leq
   \Big\| \eta_k^\a\Big(\lkr^{\f{-\d}{(1-\a)^2}}\xi\Big) \Big\|_{\scrF L^1}
    \cdot
    \bigg\| \sum_{\l\in E_k}\eta_{\l}^\a(\xi) \cdot \me^{\mi\mu_k(\xi)}\bigg\|_{\scrF L^1}
    \nonumber
  \\&\lesssim
   \sum_{\l\in E_k}
    \Big\| \eta_{\l}^\a(\xi)\cdot \me^{\mi\mu_k(\xi)}\Big\|_{\scrF L^1}
 % \\
%  &
  =
%   \sum_{\l\in E_k}
%   \left\|
%    \Upsilon_{\l}^{\a} \eta_{\l}^{\a}(\xi) \cdot \me^{\mi\mu_k\left(\llr^\aa\xi+ \laal\right)}
%   \right\|_{\scrF L^1}
%  =
   \sum_{\l\in E_k}
     \left\|
      \Upsilon_{\l}^{\a} (\eta_{\l}^{\a})(\xi) \cdot \me^{\mi\mu_{k,\l}}
   \right\|_{\scrF L^1}.
\end{align}
Furthermore, we write
\begin{align}\label{Pf.multipl.1-Special}
%  \|\eta_k^\a \cdot  \me^{\mi\mu(\xi)}\|_{\scrF L^1}
%  &\lesssim
%  \sum_{\l\in E_k}
  \left\|
      \Upsilon_{\l}^{\a} (\eta_{\l}^{\a})(\xi) \cdot \me^{\mi\mu_{k,\l}}
   \right\|_{\scrF L^1}
  =
%   \sum_{\l\in E_k}
   \left\|
      \Upsilon_{\l}^{\a} (\eta_{\l}^{\a})(\xi) \cdot \me^{\mi \psi^{\a}_{k,\l}(\xi)}
   \right\|_{\scrF L^1},
\end{align}
where
$
  \psi_{k,\l}^\a(\xi)
%  &
  :=
  \mu_{k,\l}(\xi)-\mu_{k,\l}(0)-\na\mu_{k,\l}(0)\cdot\xi
  %\mu_k\left(\llr^\aa\xi+ \laal\right)-\mu_k\left(\laal\right)-\na\mu_k\left(\laal\right)\cdot\llr^\aa\xi
%  \\
%  &
  =
  \sum\limits_{|\g|=2} \f{2}{\g!} \cdot \xi^{\g} \cdot \int_0^1 \partial^{\g}\mu_{k,\l}(\t\xi) \cdot (1-\t) \md \t.
  %\sum_{|\g|=2} \f{2}{\g!} \left(\llr^\aa\xi\right)^{\g} \int_0^1 \partial^{\g}\mu_k( \laal+t\llr^\aa\xi) \cdot (1-t) \md t.
$
%\begin{align*}
%  \psi_{k,\l}^\a(\xi)
%  &=
%  \mu_k\left(\llr^\aa\xi+ \laal\right)-\mu_k\left(\laal\right)-\na\mu_k\left(\laal\right)\cdot\llr^\aa\xi
%  \\
%  &=
%  \sum_{|\g|=2} \f{2}{\g!} \left(\llr^\aa\xi\right)^{\g} \int_0^1 \partial^{\g}\mu_k( \laal+t\llr^\aa\xi) \cdot (1-t) \md t.
%\end{align*}
Let $\eta^\ast\in C_c^{\infty}$ suppored in $B(0,2c)$ with $\eta^\ast(\xi)=1$ for $\xi\in B(0,c)$,
where $c$ is the radius of the uniform support of $\Upsilon_{\l}^{\a} (\eta_{\l}^{\a})(\xi)$ in Proposition \ref{Prop.eta-k-a in FL1}.
Observe that $\eta^\ast(\t \xi)\cdot \Upsilon_{\l}^{\a} (\eta_{\l}^{\a})(\xi)=\Upsilon_{\l}^{\a} (\eta_{\l}^{\a})(\xi)$ for $\t\in[0,1]$.
We have
\begin{align}
  \Upsilon_{\l}^{\a} (\eta_{\l}^{\a})(\xi) \cdot \me^{\mi \psi^{\a}_{k,\l}(\xi)}
   =
  \Upsilon_{\l}^{\a} (\eta_{\l}^{\a})(\xi) \cdot \me^{\mi \vph^{\a}_{k,\l}(\xi)}
  ,
  \text{ for all }\xi\in\bbR^n,
\end{align}
where
\[
  \vph_{k,\l}^\a(\xi)
  :=
  \sum\limits_{|\g|=2} \f{2}{\g!}
   \cdot \xi^{\g} \cdot \eta^\ast(\xi)
   \cdot \int_0^1 \partial^{\g}\mu_{k,\l}(\t\xi) \cdot \eta^\ast\left(\t\xi\right) \cdot (1-\t) \md \t.
\]
%For $\vph_{k,\l}^\a\in C_c$ is supported in $B(0,2c)$,
%so $\vph_{k,\l}^\a\subset L^1$ and
Furthermore, we claim that %an elaborate estimate shows that
\begin{align}\label{Pf.multipl.2-Special}
\|\vph_{k,\l}^\a \|_{\scrF L^1}
\lesssim
\sum_{|\g|=2}
  \|\pa^{\g}\mu\|_{\scrF M^{1,\i}_{2\a-2\d,\a}}
,
\end{align}
for all $\l\in E_k$.
In fact,
%noticed that $\eta^{\ast}$ defined above is a smooth function supported in a fixed compact set near origin and that $\l\in E_k$,
since $\xi^{\g}\eta^{\ast} \in C_c^{\infty} \subset \scrF L^1$ for all $|\g|=2$,
a direct calculation shows that
\begin{align*}
\|\vph_{k,\l}^\a \|_{\scrF L^1}
&\leq
 \sum_{|\g|=2} \f{2}{\g!}
    \left\| \xi^{\g} \cdot \eta^\ast(\xi) \right\|_{\scrF L^1}
     \cdot
     \left\| \int_0^1 \partial^{\g}\mu_{k,\l}(\t\xi) \cdot \eta^\ast\left(\t\xi\right) \cdot (1-\t) \md \t \right\|_{\scrF L^1}
\\
&
\lesssim
\sum_{|\g|=2}
     \left\| \partial^{\g}\mu_{k,\l}(\xi) \cdot\eta^\ast(\xi) \right\|_{\scrF L^1}
\\
&
=
\sum_{|\g|=2} \llr^\ata
      \left\| \partial^{\g}\mu_k\big(\laa\xi+\laal\big)
            \cdot \eta^\ast(\xi)%\eta^\ast\left(\llr^\naa(\xi-\laal)\right)%%频率分解函数的写法
      \right\|_{\scrF L^1}
\\
&
=
\sum_{|\g|=2} \llr^\ata
      \left\| \partial^{\g}\mu_k(\xi)
            \cdot \eta^\ast\left(\llr^\naa\xi-\l\right)%\eta^\ast\left(\llr^\naa(\xi-\laal)\right)%%频率分解函数的写法
      \right\|_{\scrF L^1}
\\
&
=
\sum_{|\g|=2} \llr^\ata
      \lkr^{\f{-2\d}{(1-\a)^2}}
      \left\| \partial^{\g}\mu\left(\lkr^{\f{-\d}{(1-\a)^2}} \xi \right)
              \cdot \eta^\ast\left(\llr^\naa\xi-\l\right)%\eta^\ast\left(\llr^\naa(\xi-\laal)\right)%%频率分解函数的写法
      \right\|_{\scrF L^1}.
\end{align*}
By (\ref{Pf.numb.Ek-Special}), we further have
\begin{align*}
\|\vph_{k,\l}^\a \|_{\scrF L^1}
&
\lesssim
\sum_{|\g|=2}
      \lkr^{\f{2\a}{1-\a}\left(1+\f{\d}{1-\a}\right)+\f{-2\d}{(1-\a)^2}}
      \left\| \partial^{\g}\mu\left( \xi \right)
              \cdot \eta^\ast\left(\llr^\naa\lkr^{\f{\d}{(1-\a)^2}}\xi-\l\right)
      \right\|_{\scrF L^1}
\\
&
\sim
\sum_{|\g|=2}
      \lkr^{\f{2\a-2\d}{1-\a}}
      \left\| \partial^{\g}\mu\left( \xi \right)
              \cdot \eta^\ast\Big(\llr^\naa\lkr^{\f{\d}{(1-\a)^2}}\xi-\l\Big)
      \right\|_{\scrF L^1}
%\\
%&
%\sim
%\sum_{|\g|=2}
%      \lkr^{\f{2\a-2\d}{1-\a}}
%      \left\| \partial^{\g}\mu\left( \xi \right)
%              \cdot
%              \eta^\ast\left( \llr^\naa  \lkr^{\f{\d}{(1-\a)^2}}  \Big(\xi-\lkr^{\f{-\d}{(1-\a)^2}}\laal \Big)\right)
%      \right\|_{\scrF L^1}
.
\end{align*}
%Now the information of support set of $\eta^*$ and (\ref{Pf.numb.Ek-Special}) yield that
Recalling that $\text{supp}\eta^\ast \subset B(0,2c)$ and $\l\in E_k$,
$\forall \xi\in \text{supp}\eta^\ast\Big(\llr^\naa\lkr^{\f{\d}{(1-\a)^2}}\cdot-\l\Big)$,
we have
\[|\xi-\lkr^{\f{-\d}{(1-\a)^2}}\laal|\leq 2c \lkr^{\f{-\d}{(1-\a)^2}}\laa.\]
Combining this with \eqref{Pf.numb.Ek-Special} and (\ref{Pf.distance.Ek-Special}), we have
\begin{align*}
  |\xi-\kaak|
  &
  \leq
  |\xi-\lkr^{\f{-\d}{(1-\a)^2}}\laal|  +  |\lkr^{\f{-\d}{(1-\a)^2}}\laal-\kaak|
  \\
  &
  \lesssim
  \lkr^{\f{-\d}{(1-\a)^2}}\laa  +  \kaa
  \lesssim
  \kaa,
\end{align*}
for all $ \xi\in \text{supp}\eta^\ast\Big(\llr^\naa\lkr^{\f{\d}{(1-\a)^2}}\cdot-\l\Big)$ and $ \l\in E_k$.
Hence, there exist a constant $C_3$ such that
\[
 \text{supp}\eta^\ast\Big(\llr^\naa\lkr^{\f{\d}{(1-\a)^2}}\cdot-\l\Big)
 %\text{supp}\eta^\ast\left( \llr^\naa  \lkr^{\f{\d}{(1-\a)^2}}  \Big(\xi-\lkr^{\f{-\d}{(1-\a)^2}}\laal \Big)\right)
 \subset
 B(\kaak,C_3\kaa),
  \forall \l\in E_k.
\]
Then we have $\#F_{k,\l}\lesssim 1$, and that $\langle k_0 \rangle  \sim  \langle k \rangle$ for all $k_0\in F_{k,\l}$ and $ \l\in E_k$,
where we denote
\[
 F_{k,\l}:=
 \Big\{
  k_0\in\bbZ^n,
  \eta_{k_0}^{\a}(\cdot) \cdot \eta^\ast\Big(\llr^\naa\lkr^{\f{\d}{(1-\a)^2}}\cdot-\l\Big)  \neq0
 \Big\}.
\]
Then by the translation and scaling invariance of $\scrF L^1$ and the definition of $\a$-modulation spaces, we further deduce that
\begin{align*}
\|\vph_{k,\l}^\a \|_{\scrF L^1}
&\lesssim
\sum_{|\g|=2}
      \lkr^{\f{2\a-2\d}{1-\a}}
      \bigg\|
       \sum_{k_0\in F_{k,\l}}
       \partial^{\g}\mu(\xi)
        \cdot
        \eta^\ast\left(\llr^\naa\lkr^{\f{\d}{(1-\a)^2}}\xi-\l\right)
       %\eta^\ast\left( \llr^\naa  \lkr^{\f{\d}{(1-\a)^2}}  \Big(\xi-\lkr^{\f{-\d}{(1-\a)^2}}\laal \Big)\right)
        \cdot
       \eta_{k_0}^{\a}(\xi)
      \bigg\|_{\scrF L^1}
\\
&
\lesssim
\sum_{|\g|=2}
      \lkr^{\f{2\a-2\d}{1-\a}}
      \sum_{k_0\in F_{k,\l}}
      \left\|
       \partial^{\g}\mu(\xi)
        \cdot
       \eta_{k_0}^{\a}(\xi)
      \right\|_{\scrF L^1}
\\
&
\sim
\sum_{|\g|=2}
      \sum_{k_0\in F_{k,\l}}
      \langle k_0 \rangle^{\f{2\a-2\d}{1-\a}}
      \left\|
       \partial^{\g}\mu(\xi)
        \cdot
       \eta_{k_0}^{\a}(\xi)
      \right\|_{\scrF L^1}
\lesssim
\sum_{|\g|=2}
  \|\pa^{\g}\mu\|_{\scrF M^{1,\i}_{2\a-2\d,\a}}
,
\end{align*}
for all $ \l\in E_k$.
Therefore, we get (\ref{Pf.multipl.2-Special}).

Combine this with (\ref{Pf.multipl.0-Special}), (\ref{Pf.multipl.1-Special}) and Proposition \ref{Prop.eta-k-a in FL1}, we have
\begin{equation*}%\label{Pf.multipl-phika}
\begin{split}
  \|\eta_k^\a \cdot  \me^{\mi\mu(\xi)}\|_{\scrF L^1}
%  \lesssim
%   \sum_{\l\in E_k}
%   \left\|
%      \Upsilon_{\l}^{\a} \eta_{\l}^{\a}(\xi) \cdot \me^{\mi \vph^{\a}_{k,\l}(\xi)}
%   \right\|_{\scrF L^1}
  %\lesssim
%   \sum_{\l\in E_k}
%   \left\|
%    \eta^{\ast}(\xi) \cdot \me^{\mi \vph^{\a}_{k,\l}(\xi)}
%   \right\|_{\scrF L^1}
%  \\
  &
  \leq
  \sum_{\l\in E_k}
   \left(
    \left\|\Upsilon_{\l}^{\a} (\eta_{\l}^{\a})(\xi)\right\|_{\scrF L^1}
    +
    \left\|
     \Upsilon_{\l}^{\a} (\eta_{\l}^{\a})(\xi)
     \cdot
     \left( \me^{\mi \vph^{\a}_{k,\l}(\xi)}-1 \right)
    \right\|_{\scrF L^1}
   \right)
  \\
  &
  \lesssim
    \sum_{l\in E_k}
     \left( 1+ \left\|\me^{\mi\vph_{k,\l}^\a(\xi)}-1\right\|_{\scrF L^1}
     \right)
   =
    \sum_{l\in E_k}
     \Bigg( 1+ \bigg\|\sum_{m=1}^{\i} \f{1} {m!} \left(\mi\vph_{k,\l}^\a(\xi)\right)^m  \bigg\|_{\scrF L^1}
     \Bigg)
  \\
  &
   \leq
    \sum_{\l\in E_k}
    \bigg(
     1+
    \sum_{m=1}^{\i} \f{1} {m!} \left\|\vph_{k,\l}^\a\right\|_{\scrF L^1}^m
    \bigg)
   \sim
    \sum_{\l\in E_k} \exp(\left\|\vph_{k,\l}^\a\right\|_{\scrF L^1})
  \\
  &
   \lesssim
   \sum_{l\in E_k}
     \exp \bigg( C\sum\limits_{|\g|=2}\|\pa^{\g}\mu\|_{\scrF M^{1,\i}_{2\a-2\d,\a}} \bigg)
     =
     \#E_k  \cdot  \exp \bigg( C\sum\limits_{|\g|=2}\|\pa^{\g}\mu\|_{\scrF M^{1,\i}_{2\a-2\d,\a}} \bigg).
\end{split}
\end{equation*}
Drawing support from (\ref{condi.mu-Special}) and (\ref{Pf.numb.Ek-Special}), we obtain
\begin{align*}
  \|\eta_k^\a  \me^{\mi\mu(\xi)}\|_{\scrF L^1}
%  \lesssim
%  \sum_{l\in E_k}
%    \exp \Bigg( C\sum\limits_{|\g|=2}\|\pa^{\g}\mu\|_{\scrF M^{1,\i}_{2\a-2\d,\a}} \Bigg)
  \lesssim\lkr^{\frac{\d n}{1-\a}}.
\end{align*}
Then we get (\ref{TH.Bdd.unim.multipl.AlphaModu.-Special-for proof 1}) and complete the proof.
\end{proof}

\textbf{Proof of Theorem \ref{TH.Bdd.unim.multipl.AlphaModu.}.}
Note that $\mu\in C^2$ and $\partial^\g \mu \in \scrF M^{1,\infty}_{2\al-2s,\a}$ for $|\g|=2$.
By Theorem \ref{TH.Bdd.unim.multipl.AlphaModu.-Special}, taking $p=1$ and $p=\infty$, we have
\ben\label{TH.Bdd.unim.multipl.AlphaModu.-for proof 1}
  \left\| \me^{\mi \mu(D)}f \right\|_{M^{1,q}_{0,\a}}
  \lesssim
  \left\| f\right \|_{M^{1,q}_{\max\{s,0\}n/2,\a}}
%\een
\text{ and }
%\ben\label{TH.Bdd.unim.multipl.AlphaModu.-for proof 2}
  \left\| \me^{\mi \mu(D)}f \right\|_{M^{\i,q}_{0,\a}}
  \lesssim
  \left\| f\right \|_{M^{\i,q}_{\max\{s,0\}n/2,\a}}.
\een
On the other hand, by the Plancherel equality we have
\begin{equation}\label{TH.Bdd.unim.multipl.AlphaModu.-Plancherel}
 \begin{split}
  \left\| \me^{\mi \mu(D)}f \right\|_{M^{2,q}_{0,\a}}
  &
  =
  \| \{\|\bka \me^{\mi \mu(D)}f\|_{L^2}\}_{\l\in\bbZ^n} \|_{\l^q}
  \sim
  \| \{\|\eta_k^\a \me^{\mi \mu(\xi)}\scrF f(\xi)\|_{L^2}\}_{\l\in\bbZ^n} \|_{\l^q}
  \\
  &
  \sim
  \| \{\|\eta_k^\a \scrF f(\xi)\|_{L^2}\}_{\l\in\bbZ^n} \|_{\l^q}
  \sim
  \| \{\|\bka f\|_{L^2}\}_{\l\in\bbZ^n} \|_{\l^q}
  \sim
  \left\| f\right \|_{M^{2,q}_{0,\a}}.
\end{split}
\end{equation}

An interpolation argument then yields the desired conclusion.
To be more specific,
 for $1\leq p<2$, applying complex interpolation theory between
 (\ref{TH.Bdd.unim.multipl.AlphaModu.-Plancherel}) and the first inequality in (\ref{TH.Bdd.unim.multipl.AlphaModu.-for proof 1}) with $\th=2(1/p-1/2)$ and $1/{p_{\th}}=(1-\th)/2+\th/1=(1+\th)/2=1/p$,
 we have
 \[
   \left\| \me^{\mi \mu(D)}f \right\|_{M^{p,q}_{0,\a}}
     \lesssim
   \left\| f\right \|_{M^{p,q}_{n\left(1/p-1/2\right) \max\{2s,0\},\a}}.
 \]
 While for  for $2\leq p \leq \i$, applying complex interpolation theory between
 (\ref{TH.Bdd.unim.multipl.AlphaModu.-Plancherel}) and the second inequality in (\ref{TH.Bdd.unim.multipl.AlphaModu.-for proof 1}) with $\th=2(1/2-1/p)$ and $1/{p_{\th}}=(1-\th)/2+\th/{\infty}=(1-\th)/2=1/p$,
we have
 \[
   \left\| \me^{\mi \mu(D)}f \right\|_{M^{p,q}_{0,\a}}
     \lesssim
   \left\| f\right \|_{M^{p,q}_{n\left(1/2-1/p\right) \max\{2s,0\},\a}}.
 \]
 Hence, we get the desired conclusion in Theorem $\ref{TH.Bdd.unim.multipl.AlphaModu.}$.

Now, we turn to give the proof for the relations $N_{2\al-2s} \subsetneqq \scrF M^{1,\i}_{2\al-2s,\a}$.
This will also indicate that our new result is an essential improvement of the previous results.

\textbf{Proof of $N_{2\al-2s} \subsetneqq \scrF M^{1,\i}_{2\al-2s,\a}$.}
The definition of these two function spaces imply that
$N_{2\al-2s} \subsetneqq \scrF M^{1,\i}_{2\al-2s,\a}\Longleftrightarrow N_{0} \subsetneqq \scrF M^{1,\i}_{0,\a}$.
We only verify the last one.
First, we verify the including relation %$N_{2\al-2s} \subset \scrF M^{1,\i}_{2\al-2s,\a}$.
%Obviously, we only need to verify that
$N_{0} \subset \scrF M^{1,\i}_{0,\a}$.
Take a function $g\in N_0$.
Using Lemma \ref{L.BernsteinIneq.}, we have
 \begin{align*}
   \|\eta_k^{\a}g\|_{\scrF L^1}
   &\lesssim
   \sum_{|\r|\leq[n/2]+1} \lkr^{\aa(|\r|-\f{n}{2})}
     \|\pa^{\r}(\eta_k^{\a}g)\|_{L^2}
   \\
   &\leq
   \sum_{|\r|\leq[n/2]+1} \lkr^{\aa(|\r|-\f{n}{2})}
     \sum_{\r_1+\r_2=\r} \| \pa^{\r_1}\eta_k^{\a}\cdot \pa^{\r_2}g \|_{L^2}
   \\
   &\lesssim
   \sum_{|\r|\leq[n/2]+1} \lkr^{\aa(|\r|-\f{n}{2})}
     \sum_{\r_1+\r_2=\r} %\lkr^{-|\t_1|\aa}
        \left\| \pa^{\r_1}\eta_k^{\a}\right\|_{L^2}
        \cdot \lkr^{\f{-|\r_2|}{1-\a}}
   \\
   &\lesssim
   \sum_{|\r|\leq[n/2]+1} \lkr^{\aa(|\r|-\f{n}{2})}
     \sum_{\r_1+\r_2=\r} \lkr^{-|\r_1|\aa}
        \cdot \lkr^{\aa\f{n}{2}}
        \cdot \lkr^{\f{-|\r_2|}{1-\a}}\lesssim 1,
 \end{align*}
 where we use the fact that
 $| \pa^{\r_2} g |
 \leq  |\xi|^{-|\r_2|}
 \sim  \lkr^{-|\r_2|/(1-\a)}$
 for $\xi\in \text{supp}\eta_k^{\a}$.
 Thus,
 \begin{equation*}
 \|g\|_{\scrF M^{1,\i}_{0,\a}}
 =\sup_{k\in\bbZ^n}\|\eta_k^{\a}g\|_{\scrF L^1}
 \lesssim
 1,
 \end{equation*}
i.e. $g\in\scrF M^{1,\i}_{0,\a}$.
Therefore, $N_{0} \subset \scrF M^{1,\i}_{0,\a}$.

Next, we give an example shown that $N_0 \subsetneqq \scrF M^{1,\i}_{0,\a}$.
Take $h$ to be a smooth function with compact support near the origin such that its derivatives of all orders are not zero functions.
Set
\be
H(\xi)=\sum_{k\in \mathbb{Z}^n}h(\xi-\langle k\rangle^{\frac{\al}{1-\al}}k).
\ee
One can easily check that $H\in \scrF M^{1,\i}_{0,\a}$.
However, the derivatives of $H$ can not decay at infinity. So $H\notin N_0$.
\hfill $\qed$% Symbol of ending proof

\subsection*{Acknowledgements}
This work was partially supported by the National Natural Foundation of China (Nos. 11601456, 11701112, 11671414, 11771388)
and  Natural Science Foundation of Fujian Province (Nos. 2017J01723, 2018J01430).

\end{document}